\documentclass{amsart}
\usepackage{amssymb}
\usepackage{amsmath}
\usepackage{amsthm}
\usepackage{enumerate}
\usepackage{graphicx}
\usepackage{color}

\def\p{\varphi}

\def\l{\lambda}
\def\L{\Lambda}

\def\a{\alpha}
\def\b{\beta}
\def\d{\delta}

\def\pa{\partial}

\def\R{{\mathbb R}}

\def\A{\mathcal{A}}
\def\B{\mathcal{B}}

\def\L{\mathcal{L}}

\newtheorem{theorem}{Theorem}[section]
\newtheorem{lemma}[theorem]{Lemma}

\newtheorem{definition}[theorem]{Definition}
\newtheorem{example}[theorem]{Example}

\newtheorem{remark}{\bf Remark}[section]



\newfont{\Bb}{msbm10 scaled\magstep{1}}

\textheight=185mm
\textwidth=120mm


\begin{document}

\title[Cracked beams and arches]{Equations of motion for cracked beams and shallow arches}

\author{Semion Gutman, Junhong Ha and Sudeok Shon}
\address{$^{1}$ Department of Mathematics, University of Oklahoma, Norman, Oklahoma 73019, USA, e-mail: sgutman@ou.edu}
\address{$^{1}$School of Liberal Arts, Korea University of Technology and Education,
        Cheonan 31253, South Korea, e-mail: hjh@koreatech.ac.kr}
\address{$^{3}$Department of Architectural Engineering, Korea University of Technology and Education,
        Cheonan 31253, South Korea, e-mail: sdshon@koreatech.ac.kr}

\subjclass[2010]{ 47J35, 35Q74, 35D30, 70G75}

\keywords{Shallow arch, beam, subdifferential, cracks, Hamilton's principle}

\begin{abstract} 
Cracks in beams and shallow arches are modeled by  massless rotational springs. First, we introduce a specially designed linear operator that "absorbs" the boundary conditions at the cracks. Then the  equations of motion are derived from the first principles using the Extended Hamilton's Principle, accounting for non-conservative forces. The variational formulation of the equations is stated in terms of the subdifferentials of the bending and axial potential energies. The equations are given in their abstract (weak), as well as in classical forms.
\end{abstract}

\maketitle


\section{Introduction}\label{section:intro}

Modeling dynamic behavior of cracked beams and arches has important engineering applications.
The main goal of this paper is to develop  a rigorous mathematical framework for such problems.

The theory of uniform beams and shallow arches is well developed. An early exposition can be found in  \cite{BALL197361}. More general models in the multidimensional setting, and a literature survey are presented in \cite{Emmrich_2011}. A review for vibrating beams is given in \cite{HAN1999}. Motion of uniform arches and a related parameter estimation problem are studied in \cite{GUTMAN2013297}. These results are extended to point loads in \cite{GH2017}. The existence of a compact, uniform attractor is established in \cite{GUTMAN2018557}. 

For a theory of cracked Bernoulli-Euler beams see  \cite{Christides1984}.  
A significant effort has been directed at the vibration analysis of cracked beams. Representation of a crack by a rotational spring has been proven to be accurate, and it is often used, see  \cite{CADDEMI2009, CADDEMI2013944} and the extensive bibliography there. Determination of the beam natural frequencies is discussed in  \cite{LIN2002987, OSTACHOWICZ1991191, SHIFRIN1999}. 
S. Caddemi and his colleagues have further developed the theory using energy functions in \cite{CADDEMI2013944}. Substantial reviews of cracked elements are presented in \cite{Cannizzaro2017, DIMAROGONAS1996, DIMAROGONAS1998}.

 The transverse motion of a beam or an  arch is described by the function $y(x,t),\, x\in [0,\pi],\, t\geq 0$, which represents the deformation of the beam/arch measured from the $x$-axis. For definiteness, the boundary conditions are of the hinged type
 \begin{equation}\label{intro:eq32}
y(0,t)=y''(0,t)=0,\quad y(\pi,t)=y''(\pi,t)=0,\quad t \in (0,T).
\end{equation}
 Other types of boundary conditions, can be treated similarly.
 
\begin{figure}
\begin{center}
\includegraphics[height=3.5cm,width=11.5cm]{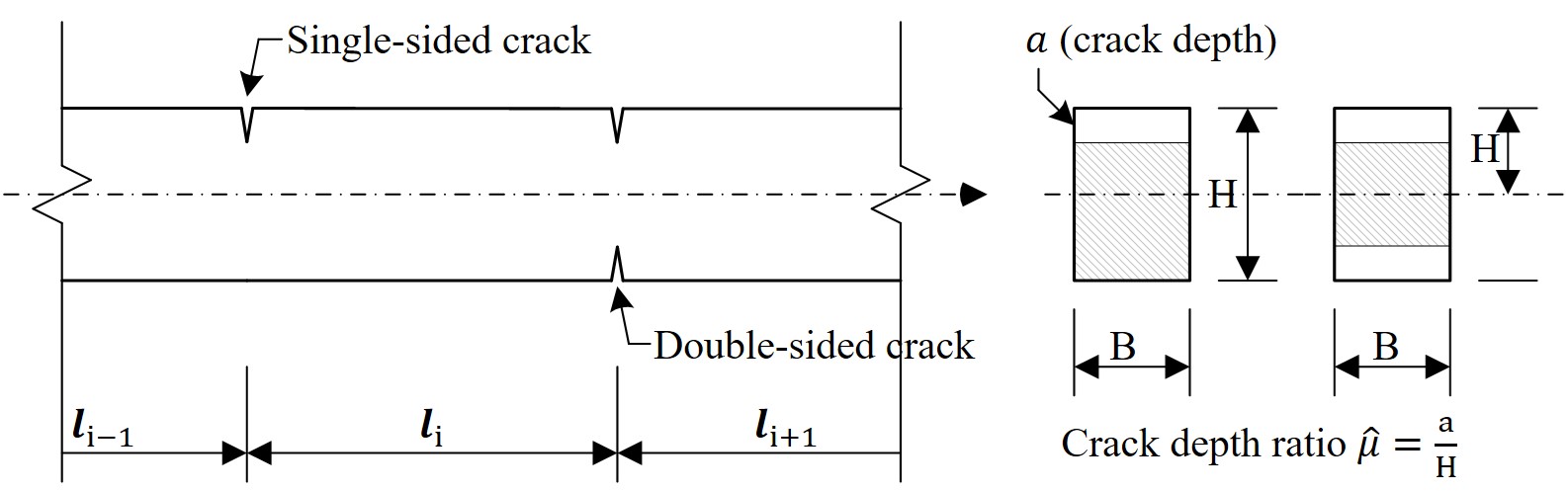}
\end{center}
\vspace{-0.5cm}
\caption{Crack parameters.} \label{figCracks}
\end{figure}
  A crack is fully described by its position along the axis, and the crack depth ratio $\hat\mu$, as shown in Figure \ref{figCracks}. 
According to the common practice in the field, see \cite{CERRI200439}, a crack is modeled by a massless rotational spring.
The  spring flexibility $\theta=\theta(\hat\mu)$ depends on the crack depth ratio $\hat\mu$, and on whether the crack is one-sided or two-sided, open or closed, and so on. The flexibility $\theta$ is equal to $0$ if there is no crack, and it increases with the crack depth.
Explicit expressions for the functions $\theta(\hat\mu)$ are provided in Section \ref{section:phys}.  

\begin{remark}
The following discussion is applicable to both arches and beams, but to avoid repetitions we will refer just to arches. 
\end{remark}

Suppose that there are $m$ cracks along the length of the arch, located at $0<x_1<\dots <x_m<\pi$.  For convenience, we denote $x_0=0$, and  $x_{m+1}=\pi$. 
Consequently, the cracked arch is modeled as a collection of $m+1$ uniform arches over the intervals $l_i=(x_{i-1},x_i),\, i=1,\dots, m+1$, as shown in Figure \ref{figCrArch}(b).

We consider only the transverse motion of the arch, so its position can be described by the function $y=y(x,t)$, $0\leq x\leq \pi$, $t\geq 0$.
The boundary conditions at the cracks enforce the continuity of the displacement field $y$, the bending moment $y''$, and the shear force $y'''$. 
Condition $y'(x_i^+,t)-y'(x_i^-,t)=\theta_i y''(x_i^+,t)$ expresses the discontinuity of the arch slope at the $i$-th crack, where $\theta_i=\theta(\hat\mu_i)$, see Figure \ref{figCrArch}(b). 

\begin{figure}[b]
\begin{center}
\includegraphics[height=2.5cm,width=11.5cm]{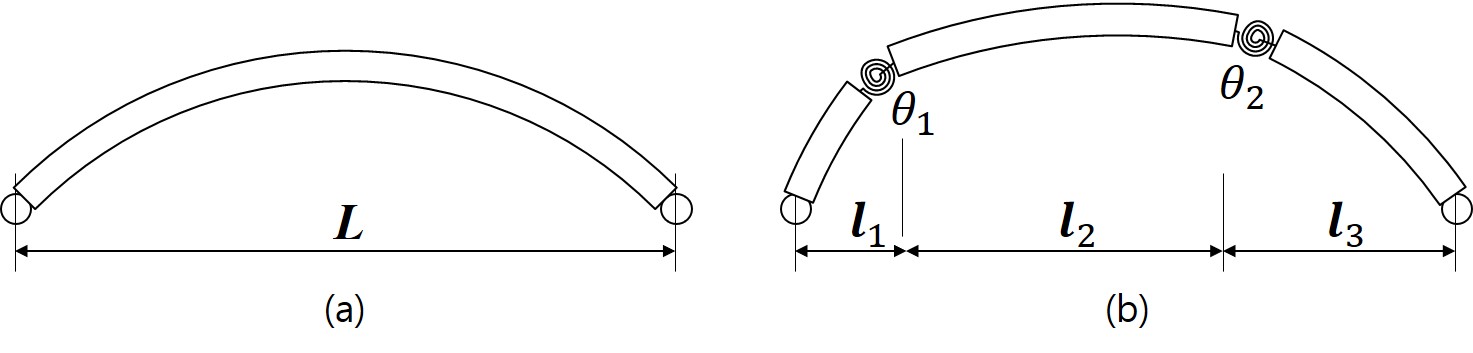}
\end{center}
\vspace{-0.5cm}
\caption{Beam or shallow arch: (a) uniform, (b) with two cracks.} \label{figCrArch}
\end{figure}

To simplify the statement of the boundary conditions at the cracks, we introduce the notion of the \emph{jump} $J[u](x)$ of a function $u=u(x)$ at any $x\in(0,\pi)$, as follows
\begin{equation}\label{intro:eq16}
 J[u](x)=u(x^+)-u(x^-).
\end{equation}

With this notation the conditions at the cracks (joint conditions) are
\begin{equation}\label{intro:eq18}
 J[y](x_i,t)=0,\quad J[y''](x_i,t)=0,\quad J[y'''](x_i,t)=0,
\end{equation}
and
\begin{equation}\label{intro:eq20}
 J[y'](x_i,t)=\theta_i y''(x_i^+,t),
\end{equation}
where $\theta_i=\theta(\hat\mu_i)$, $i=1,2,\dots,m$ and $t\geq 0$. Note that $y''(x_i^+,t)=y''(x_i^-,t)$ by \eqref{intro:eq18}.

In Section \ref{section:hilbert} we review our recent results from \cite{g37} on the variational setting for cracked beams and arches. 
First, special Hilbert spaces $V, H_0^1, H$ are defined satisfying
\begin{equation}\label{intro:eq36}
V\subset H_0^1\subset H\subset (H_0^1)'\subset V',
\end{equation}
with continuous and dense embeddings. These spaces are broad enough to contain continuous functions with discontinuous derivatives at the joint points.

Then we introduce the operator $\A : V\to V'$, by

\begin{equation}\label{intro:eq38}
\langle \A u,v\rangle_V= \sum_{i=1}^{m+1} (u'', v'')_i+\sum_{i=1}^m \frac 1\theta_i J[u'](x_i) J[v'](x_i), 
\end{equation}
for any $u,v\in V$, where $J[u'](x)=u'(x^+)-u'(x^-)$.

Our main result in \cite{g37} is that the solution $u$ of the equation $\A u=f$ in $H$ satisfies the joint conditions, including \eqref{intro:eq20}. Thus the operator $\A$ "absorbs" the boundary conditions, as expected of the weak  formulation of the steady state problem.
This result allows us to mathematically justify the existence of the eigenvalues and the eigenfunctions of $\A$.

 In Section \ref{section:phys} we consider relevant physical quantities, including the potential energy $U_b$, due to bending, and the potential  energy $U_a$, due to the axial force. This is the only section in the paper, that contains physical variables. Their non-dimensional equivalents are used in all the other sections.

Typically,  equations of motion are derived from the Hamilton's Principle $\d I=0$, which seeks the stationary paths of the action $I$. A closer examination of this statement in the framework of the Hilbert spaces reveals the importance of the subdifferential $\pa\phi$ of a convex lower-semicontinuous function $\phi$. The potential energies $U_b$, and $U_a$ are two examples of such functions.

Section \ref{section:conv} presents a brief review of these concepts.  One can view the subdifferential, which can be multi-valued,  as a generalization of the derivative. For example, let $\phi(x)=|x|,\, x\in \R$. Then $\pa\phi(x)=-1$, for $x<0$, $\pa\phi(x)=1$, for $x>0$, and $\pa\phi(0)=[-1,1]$. Geometrically, it means that the "tangent" line to the graph of $\phi$ at $x=0$ can have any slope between $-1$ and $1$. We conclude Section \ref{section:conv} by deriving various expressions for the subdifferentials $\pa U_b$, and $\pa U_a$. In particular, we show that $\pa U_b=\A$.

Section \ref{section:ham} uses the Extended Hamilton's Principle, which accommodates non-conservative forces, to derive the equations of motion. It contains our main result: the abstract equation of motion for cracked beams and arches is
\begin{equation}\label{intro:eq40}
\ddot y+\pa U_b(y)+\pa U_a(y)+c_d\dot y=p,
\end{equation}
where $\dot y, \ddot y$ denote the time derivatives.

For beams, it is assumed that the influence of the axial force can be neglected. This case is considered in Section \ref{section:beameq}. The abstract equation for  cracked beams is
\begin{equation}\label{intro:eq42}
\ddot y+\A y+c_d\dot y=p.
\end{equation}
If the beam contains no cracks, then \eqref{intro:eq42} becomes $\ddot y+y''''+c_d\dot y=p$,
which is consistent with the classical Euler-Bernoulli theory.

Our main result in Section \ref{section:archeq} is that the "classical" equation for a cracked shallow arch is  
\begin{align*}
&\ddot y+ y''''\\
&-\frac 1\pi\left(\b+ \frac{1}{2}\int_0^L|y'(x,t)|^2\, dx\right)\left(y''+\sum_{i=1}^m \theta_i y''(x_i,t)\d(x-x_i)\right)+ c_d\dot y=p,
\end{align*}
where $\d=\d(x)$ is the delta function. 

Motion in viscous media results in the additional term $\mu\A\dot y$, $\mu>0$ in the governing equations. Such a case is referred to as the strong damping motion. If the viscous effects are neglected ($\mu=0$), we have the weak damping case.

\section{Variational setting for cracked beams and arches}\label{section:hilbert}
\setcounter{equation}{0}
This section contains a brief review of our results from \cite{g37}, to which we refer for further details. 
Let $H$ be the Hilbert space
\begin{equation}\label{hilbert:eq10}
H=\bigoplus_{i=1}^{m+1} L^2(l_i).
\end{equation}

Let the inner product and the norm in $L^2(l_i)$ be denoted by $(\cdot,\cdot)_i$ and $|\cdot|_i$ correspondingly. 
The inner product and the norm in $H$ are defined by
\begin{equation}\label{hilbert:eq12}
(u,v)_H=\sum_{i=1}^{m+1}(u,v)_i, \quad |u|^2_H= \sum_{i=1}^{m+1}|u|^2_i.
\end{equation}

Consider the Sobolev space $H^2(a,b)$ on a bounded interval $(a,b)\subset \R$, and let
$u\in H^2(a,b)$. Then $u, u'$ are continuous functions on $[a,b]$, up to a set of measure zero, and $u''\in L^2(a,b)$. Therefore, for such $u$, we will always assume that $u, u'\in C[a,b]$.

Define the linear space
\begin{equation}\label{hilbert:eq22}
V=\left\{ u\in \bigoplus_{i=1}^{m+1} H^2(l_i)\, :\, u(0)=u(\pi)=0,\ J[u](x_i)=0,\ i=1,\dots, m\right\}.
\end{equation}
We interpret  $u\in V$ as a continuous function on $[0,\pi]$, such that $u(0)=u(\pi)=0$, with $u'\in L^2(0,\pi)$, i.e. $u\in H_0^1(0,\pi)$. Furthermore,  $u|_{l_i}\in H^2(l_i)$, and $u'|_{l_i}\in C[x_{i-1}, x_i]$ for  $i=1,2,\dots, m+1$. 

 Define the inner product on $V$  by
\begin{equation}\label{hilbert:eq24}
((u,v))_V=\sum_{i=1}^{m+1} (u'', v'')_i+\sum_{i=1}^m J[u'](x_i) J[v'](x_i), \quad\text{for any}\ u,v\in V,
\end{equation}
where $(u'',v'')_i=\int_{l_i} u''(x) v''(x)\, dx$.

 The corresponding norm in $V$ is
\begin{equation}\label{hilbert:eq26}
\|u\|_V^2=\sum_{i=1}^{m+1} |u''|^2_i+\sum_{i=1}^m |J[u'](x_i)|^2, \quad\text{for any}\ u\in V,
\end{equation}
where $|\cdot|_i$ is the norm in $L^2(l_i)$. It can be shown that $V$ is a Hilbert space. 
 
 At this time we introduce the Hilbert space $H_0^1=H_0^1(0,\pi)$, with the inner product and the norm given by
 \begin{equation}\label{hilbert:eq60}
(u,v)_1=(u',v')_H,\quad \|u\|_1^2=|u'|_H^2, \quad u, v\in H_0^1.
\end{equation}
The norm in $(H_0^1)'$ will be denoted by $\|\cdot\|_{-1}$. It can be shown that the identity embedding $i : V\to H_0^1$ is linear, continuous, with a dense range in $H_0^1$. Furthermore, it is compact.
 
This allows us to define the Gelfand triple $V\subset H\subset V'$, with the pairing $\langle\cdot,\cdot\rangle_V$ between $V$ and $V'$ extending the inner product in $H$. This means that given $f\in H=H'\subset V'$, and $v\in V$, we have $\langle f,v\rangle_V=(f,v)_H$. 

Also, we have 
\begin{equation}\label{hilbert:eq62}
V\subset H_0^1\subset H\subset \left( H_0^1\right)'\subset V',
\end{equation}
with dense embeddings. Furthermore, the embeddings $ V\subset H_0^1\subset H$ are compact.


Now we can introduce the operator $\A : V\to V'$ that "absorbs" the junction boundary conditions.
This operator is central to the variational setting of problems for cracked beams and arches.

\begin{definition}\label{var:def2}
Define the  operator $\A$ on $V$ by
\begin{equation}\label{var:eq8}
\langle \A u,v\rangle_V= \sum_{i=1}^{m+1} (u'', v'')_i+\sum_{i=1}^m \frac 1\theta_i J[u'](x_i) J[v'](x_i), 
\end{equation}
for any $u,v\in V$. We will also write $\langle \A u,v\rangle$ for $\langle \A u,v\rangle_V$, if it does not cause a confusion.
\end{definition}
 Recall, that a linear operator $A : V\to V'$ is called coercive, if there exists $c>0$, such that $\langle Au, u\rangle\geq c\|u\|^2_V$ for any $u\in V$. We have
\begin{lemma}\label{var:lemma1}
Let $\A$ be defined by \eqref{var:eq8}. Then $\A$ is a symmetric, continuous, linear, and coercive operator from $V$ onto $V'$.
\end{lemma}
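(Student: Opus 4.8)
The plan is to verify the four properties—symmetry, linearity, continuity, coerciveness—and then deduce surjectivity via the Lax--Milgram theorem. Linearity is immediate from the bilinearity of each term on the right-hand side of \eqref{var:eq8} in $u$ (with $v$ fixed), together with the fact that $u\mapsto J[u'](x_i)$ is linear. Symmetry is equally transparent: each summand $(u'',v'')_i$ and $\frac1{\theta_i}J[u'](x_i)J[v'](x_i)$ is manifestly symmetric under interchange of $u$ and $v$, and $\theta_i>0$ (since a genuine crack has positive flexibility), so $\langle\A u,v\rangle=\langle\A v,u\rangle$ for all $u,v\in V$. Here one should note that $\A$ being ``symmetric'' for an operator $V\to V'$ means precisely that the induced bilinear form is symmetric.

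For continuity, I would estimate $|\langle\A u,v\rangle|$ directly. By the triangle inequality and Cauchy--Schwarz in each $L^2(l_i)$,
\begin{equation}\label{var:contest}
|\langle\A u,v\rangle|\le \sum_{i=1}^{m+1}|u''|_i\,|v''|_i+\sum_{i=1}^m\frac1{\theta_i}|J[u'](x_i)|\,|J[v'](x_i)|.
\end{equation}
Letting $\theta_{\min}=\min_{1\le i\le m}\theta_i>0$, the second sum is bounded by $\theta_{\min}^{-1}\sum_{i=1}^m|J[u'](x_i)|\,|J[v'](x_i)|$, and then applying the discrete Cauchy--Schwarz inequality to both sums and comparing with the definition \eqref{hilbert:eq26} of $\|\cdot\|_V$ yields $|\langle\A u,v\rangle|\le \max\{1,\theta_{\min}^{-1}\}\,\|u\|_V\,\|v\|_V$, which is exactly the boundedness of the bilinear form, hence the continuity of $\A:V\to V'$.

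Coerciveness is the one place where the hypothesis $\theta_i>0$ does real work rather than merely being assumed positive. Taking $v=u$ in \eqref{var:eq8} gives
\begin{equation}\label{var:coerest}
\langle\A u,u\rangle=\sum_{i=1}^{m+1}|u''|_i^2+\sum_{i=1}^m\frac1{\theta_i}|J[u'](x_i)|^2\ge \min\Bigl\{1,\tfrac1{\theta_{\max}}\Bigr\}\Bigl(\sum_{i=1}^{m+1}|u''|_i^2+\sum_{i=1}^m|J[u'](x_i)|^2\Bigr),
\end{equation}
where $\theta_{\max}=\max_{1\le i\le m}\theta_i$, and the bracketed quantity is $\|u\|_V^2$ by \eqref{hilbert:eq26}. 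Hence $\langle\A u,u\rangle\ge c\|u\|_V^2$ with $c=\min\{1,1/\theta_{\max}\}>0$. Finally, having a bounded, coercive, symmetric bilinear form on the Hilbert space $V$, the Lax--Milgram theorem gives that for every $f\in V'$ there is a unique $u\in V$ with $\A u=f$, i.e.\ $\A$ maps $V$ \emph{onto} $V'$ (and is injective). I do not anticipate a genuine obstacle here: the only subtlety is bookkeeping with the constants $\theta_{\min},\theta_{\max}$ and making sure the norm on $V'$ is the dual norm so that ``onto'' is the right word; everything else is a direct comparison between \eqref{var:eq8} and the norm \eqref{hilbert:eq26}, which by construction differ only by the factors $1/\theta_i$.
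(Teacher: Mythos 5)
Your proof is correct: the weighted bilinear form in \eqref{var:eq8} differs from the $V$-inner product \eqref{hilbert:eq24} only by the factors $1/\theta_i$, so boundedness and coercivity follow by direct comparison with constants $\max\{1,1/\theta_{\min}\}$ and $\min\{1,1/\theta_{\max}\}$, and Lax--Milgram (or, since the form is symmetric and defines an equivalent inner product, the Riesz representation theorem) gives surjectivity onto $V'$. The paper itself does not reproduce a proof of this lemma, deferring to \cite{g37}, and your argument is exactly the standard one that result rests on; no gaps.
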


As was mentioned in Section \ref{section:intro}, functions $u=u(x)$ modeling an arch with cracks are expected to satisfy certain boundary conditions. For convenience, we restate them here:
\begin{equation}\label{var:eq2}
u(0)=u(\pi)=0,\quad u''(0)=u''(\pi)=0,
\end{equation}
and 
\begin{equation}\label{var:eq4}
 J[u](x_i)=0,\quad J[u''](x_i)=0,\quad J[u'''](x_i)=0,\quad J[u'](x_i)=\theta_i u''(x_i^+),
\end{equation}
for $i=1,\dots,m$.

The next theorem is the main result of this section.
\begin{theorem}\label{var:thm2}
Let the domain of $\A$ be $D(\A)=\{ v\in V : \A v\in H\}$.
\begin{enumerate}[(i)]
\item If $u\in D(\A)$, then $u|_{l_i}\in H^4(l_i)$, $\A u=u''''$ a.e. on $l_i$,  $i=1,\dots,m+1$, and $u$ satisfies  conditions \eqref{var:eq2}--\eqref{var:eq4}. 
\item Let $f\in H$, then equation $\A u=f$ in $V'$ has a unique solution $u\in D(\A)$.
\end{enumerate}
\end{theorem}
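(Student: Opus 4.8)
The plan is to prove part (ii) first using the Lax--Milgram theorem, and then extract the regularity and boundary conditions in part (i) by a bootstrapping argument based on localized test functions.

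For part (ii): by Lemma \ref{var:lemma1}, $\A : V \to V'$ is a bounded, coercive, symmetric bilinear form, so the Lax--Milgram theorem gives, for each $f \in H \subset V'$, a unique $u \in V$ with $\langle \A u, v\rangle_V = \langle f, v\rangle_V = (f,v)_H$ for all $v \in V$. Since $\A u = f \in H$, by definition $u \in D(\A)$; uniqueness in $D(\A) \subset V$ is immediate from uniqueness in $V$. Note the pairing identity $\langle f,v\rangle_V = (f,v)_H$ is exactly the Gelfand-triple property recalled before \eqref{hilbert:eq62}.

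For part (i): fix $u \in D(\A)$ and let $f = \A u \in H$, so $\sum_i (u'',v'')_i + \sum_i \theta_i^{-1} J[u'](x_i)J[v'](x_i) = \sum_i (f,v)_i$ for all $v \in V$. The key step is to test against functions $v$ supported in a single open subinterval $l_i$: for such $v$, all jump terms $J[v'](x_j)$ vanish, leaving $(u'', v'')_i = (f,v)_i$ for all $v \in C_c^\infty(l_i)$. This says $u''|_{l_i}$ has a weak second derivative equal to $f|_{l_i} \in L^2(l_i)$, hence $u|_{l_i} \in H^4(l_i)$ and $u'''' = f = \A u$ a.e. on $l_i$; in particular $u'', u'''$ are continuous up to the endpoints of each $l_i$. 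The conditions $J[u](x_i)=0$ and $u(0)=u(\pi)=0$ are built into the definition \eqref{hilbert:eq22} of $V$. The remaining conditions come from testing against $v \in V$ that do not vanish near a joint or an endpoint: integrate $(u'',v'')_i$ by parts twice on each $l_i$ to get $\sum_i (u'''', v)_i$ plus boundary terms $[u''v' - u'''v]$ evaluated at the endpoints $x_{i-1}^+, x_i^-$; summing over $i$ and regrouping the boundary contributions at each interior node $x_i$ produces terms with $J[u''](x_i)$, $J[u'''](x_i)$, $u''(x_i^{\pm})$, $u'''(x_i^{\pm})$, together with the spring term $\theta_i^{-1} J[u'](x_i) J[v'](x_i)$; matching against $(f,v)_H = \sum_i (u'''',v)_i$ and using the freedom in $v(x_i)$, $v'(x_i^{\pm})$, $v''(0)$, $v''(\pi)$ (which range over suitable spaces as $v$ varies in $V$) forces, in turn, $u''(0)=u''(\pi)=0$, $J[u'''](x_i)=0$, $J[u''](x_i)=0$, and finally $\theta_i^{-1}J[u'](x_i) = u''(x_i^+)$, i.e. $J[u'](x_i) = \theta_i u''(x_i^+)$, which is \eqref{var:eq4}.

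The main obstacle is the bookkeeping in this last step: one must argue carefully that the traces $v(x_i)$, $v'(x_i^+) - v'(x_i^-)$, and the end-values $v''(0), v''(\pi)$ can be prescribed independently by choosing appropriate $v \in V$, so that each boundary term must vanish separately (or, for the $\theta_i$-term, balance the spring term). A clean way to handle this is to order the test functions hierarchically: first use $v \in C_c^\infty$ on each $l_i$ (as above), then $v$ vanishing to first order at all nodes but with free $v''$ at $0,\pi$, then $v$ with $v(x_i)=0$ but $v'$ jumping, and finally fully general $v$; at each stage the previously derived conditions kill all but one family of boundary terms. The construction of such $v$ in $V$ is routine (piecewise cubic bump functions), so no genuine difficulty remains beyond organizing the argument.
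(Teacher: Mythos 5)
Your overall route is the standard one and is essentially what this theorem requires: the paper itself does not reproduce the proof (it defers to the reference \cite{g37}), but Lax--Milgram for part (ii), interior regularity by testing with $v\in C_c^\infty(l_i)$, and then integration by parts twice on each $l_i$ with test functions having prescribed traces at the nodes to recover the natural conditions \eqref{var:eq2}--\eqref{var:eq4} is exactly the expected argument, and your bookkeeping at the interior nodes (the cancellation $\bigl(\theta_i^{-1}J[u'](x_i)-u''(x_i^{\pm})\bigr)J[v'](x_i)$, etc.) is correct.

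One concrete slip to fix: after integrating $(u'',v'')_i$ by parts twice, the endpoint contributions at $x=0,\pi$ are $u''(\pi)v'(\pi)-u''(0)v'(0)$ (the $u'''v$ terms die because $v(0)=v(\pi)=0$ in $V$), so the free traces you need at the ends are $v'(0)$ and $v'(\pi)$, not $v''(0)$ and $v''(\pi)$. As written, your hierarchical step ``$v$ vanishing to first order at all nodes but with free $v''$ at $0,\pi$'' proves nothing about $u''(0),u''(\pi)$, since with $v'(0)=v'(\pi)=0$ those boundary terms vanish identically regardless of $u''$; choosing instead $v\in V$ with $v'(0)\neq 0$ (respectively $v'(\pi)\neq 0$) and vanishing traces elsewhere gives $u''(0)=u''(\pi)=0$ as desired. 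With that correction, and the routine construction of piecewise-polynomial test functions in $V$ realizing independently prescribed values of $v(x_i)$, $v'(x_i^-)$, $v'(x_i^+)$, $v'(0)$, $v'(\pi)$, your argument is complete.
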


{\bf Remark}. The fact that $u''''=f$ a.e. on $(0,\pi)$ in Theorem \ref{var:thm2} does not imply that $u\in H^4(0,\pi)$. This is similar to the fact that the strong derivative $p'$ of a step function $p$ on $(0,\pi)$ is zero a.e. on $(0,\pi)$. However,  $p\not\in H^1(0,\pi)$. 

Finally in this section we discuss the {\bf eigenvalues} and the {\bf eigenfunctions} of the operator $\A$.
 It was shown in Lemma 
\ref{var:lemma1} that $\A$ is a continuous, linear, symmetric, and coercive operator from $V$ onto $V'$. 
Following \cite[Section 2.2.1]{temam1997infinite}, 
$\A$ can also be considered as an unbounded operator in $H$.

 Since the embedding $V\subset H$ is compact, the standard spectral theory for Sturm-Liouville boundary value problems is applicable. The eigenfunctions belong to $H$. Therefore, by Theorem \ref{var:thm2}, they are in the domain  $D(\A)\subset V$, thus continuous on $[0,\pi]$, and satisfy  conditions \eqref{var:eq2}--\eqref{var:eq4}.

 We summarize these results in the following lemma.
\begin{lemma}\label{var:lemma20}
Let $\A$ be the operator defined in \eqref{var:eq8}. Then
\begin{enumerate}[(i)]
\item There exists an increasing sequence of its real positive eigenvalues \newline
$\l^4_1,\l^4_2,\dots$, with $\lim_{k\to\infty}\l^4_k=\infty$. 
\item The corresponding eigenfunctions $\varphi_k\in D(\A)\subset V$, $k\geq 1$, and they satisfy the junction conditions \eqref{var:eq2}--\eqref{var:eq4}.
\item The eigenfunctions $\varphi_k$ satisfy $\A\varphi_k=\l^4_k\varphi_k$ in $H$, $k \geq 1$. That is, $\varphi_k''''(x)=\l^4_k\varphi_k(x)$ a.e. on every interval $l_i$, $i=1,\dots, m+1$. 
\item The set $\{\varphi_k\}_{k=1}^\infty$ is a complete orthonormal basis in $H$.

\end{enumerate} 
\end{lemma}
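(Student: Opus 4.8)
The plan is to reduce the statement to the classical spectral theorem for compact, self\-adjoint, positive operators on a Hilbert space, applied not to $\A$ itself but to its inverse. First I would verify that $\A^{-1}$ is a well\-defined bounded operator from $H$ into $V$. By Theorem \ref{var:thm2}(ii), for every $f\in H$ the equation $\A u=f$ has a unique solution $u\in D(\A)\subset V$, so $K:=\A^{-1}:H\to D(\A)$ is well defined and linear. Coercivity of $\A$ (Lemma \ref{var:lemma1}) together with the continuous embedding $H\subset V'$ from \eqref{hilbert:eq62} gives $c\|u\|_V^2\le \langle \A u,u\rangle=(f,u)_H\le \|f\|_H\,|u|_H\le C\|f\|_H\,\|u\|_V$, hence $\|\A^{-1}f\|_V\le C'\|f\|_H$. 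Composing with the compact embedding $V\subset H$, the operator $K:H\to H$ is compact.

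Next I would check that $K$ is self\-adjoint and positive definite on $H$. Given $f,g\in H$, put $u=Kf$ and $v=Kg$, so $\A u=f$ and $\A v=g$ in $V'$. Since $g\in H\subset V'$ and $u\in V$, the Gelfand pairing gives $(Kf,g)_H=(u,g)_H=\langle \A v,u\rangle$, and the symmetry of $\A$ yields $\langle \A v,u\rangle=\langle \A u,v\rangle=(f,v)_H=(f,Kg)_H$; thus $K=K^\ast$ on $H$. Moreover $(Kf,f)_H=\langle \A u,u\rangle\ge c\|u\|_V^2\ge 0$, with equality only if $u=0$, i.e. $f=\A u=0$; hence $K$ is injective and positive definite.

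Then the spectral theorem for compact self\-adjoint operators applies: $H$ admits an orthonormal basis $\{\varphi_k\}_{k\ge1}$ of eigenfunctions of $K$, with $K\varphi_k=\mu_k\varphi_k$ and $\mu_k\in\R$; positivity forces $\mu_k>0$, and injectivity together with $\dim H=\infty$ (since $H=\bigoplus L^2(l_i)$) forces infinitely many distinct eigenvalues with $\mu_k\to0$. Setting $\l_k^4:=\mu_k^{-1}$ (the fourth\-power notation anticipating $\A\sim d^4/dx^4$) and ordering increasingly gives (i) and (iv), the latter being exactly the completeness of the eigenbasis, no kernel component being lost because $K$ is injective. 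Since $K\varphi_k=\mu_k\varphi_k$ is equivalent to $\A\varphi_k=\l_k^4\varphi_k$ with the right\-hand side in $H$, we have $\varphi_k\in D(\A)\subset V$, and then Theorem \ref{var:thm2}(i) delivers (ii) and the pointwise content of (iii): $\varphi_k|_{l_i}\in H^4(l_i)$ and $\varphi_k''''=\l_k^4\varphi_k$ a.e. on each $l_i$.

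The Lax--Milgram\-type estimate and the bookkeeping with the Gelfand triple are routine; the one point demanding care is confirming that $K=\A^{-1}$ genuinely maps $H$ into $H$ compactly and is self\-adjoint for the $H$ inner product rather than merely for the $V$--$V'$ pairing, since the distinctive feature of this problem --- functions in $V$ with jumps in the first derivative at the cracks --- lives precisely in the gap between $V$ and $H$. Once that is settled, the remainder is the standard Sturm--Liouville argument as referenced from \cite{temam1997infinite}.
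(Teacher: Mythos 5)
Your proposal is correct and follows essentially the same route as the paper: the paper simply invokes the standard spectral theory for a symmetric coercive operator with compact embedding $V\subset H$ (citing Temam) and then uses Theorem \ref{var:thm2} for membership in $D(\A)$, the junction conditions, and the identification $\A\varphi_k=\varphi_k''''$ on each $l_i$. Your compact self-adjoint inverse $K=\A^{-1}$ argument is precisely the standard machinery behind that citation, carried out in detail, including the one delicate point (self-adjointness and compactness of $K$ in $H$ rather than in the $V$--$V'$ pairing).
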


An efficient method  for a computational determination of the eigenvalues and the eigenfunctions of $\A$ (Modified Shifrin's method) is discussed in \cite{g37}.

\section{Physical parameters and their non-dimensional equivalents}\label{section:phys}
\setcounter{equation}{0} 
This is the only section in the paper 
 that uses physical variables. All the other sections use their non-dimensional equivalents. Since both the physical variables and their equivalents use the same notation, such an arrangement helps to avoid confusing them. The physical variables are contained in
Table \ref{phys:table1} together with their units. 
 Recall that the newton  $N=kg\cdot m\cdot s^{-2} $ is the unit of force.

\begin{table}[ht]
\begin{tabular}{llll}
$L$     & Length of bar & $m$ &			\\ 
$T$     & Final time & $s$ &    \\ 
$y$     & Bar deflection & $m$ & $y=y(x,t)$    \\ 
$p$     & Transverse load/unit of length & $N\cdot m^{-1}$ & $p=p(x,t)$    \\ 
$c_d$     & Damping coefficient & $kg\cdot m^{-1} s^{-1}   $ &    		\\
$\kappa$  & Bar curvature &  $m^{-1}$ &   			\\ 
$\rho$  & Volume mass density &  $kg\cdot m^{-3}$ &  		\\ 
$E$     & Young's modulus & $N\cdot m^{-2}$ &	\\ 
$A$     & Cross-sectional area & $m^2$ &    \\ 
$I$     & Area moment of inertia & $m^4$ &   		\\ 
$r$     & Radius of gyration & $m$ & $r=\sqrt{I/A}$    \\ 
$\omega_0$     & $t$-scale & $s^{-1}$ &     \\ 
$\omega$     & Natural frequency & $s^{-1}$ &     \\
$k$     & Spring stiffness &  $N\cdot m/rad$ &   				\\ 
$\theta$     & Flexibility & $m$ & $\theta=EI/k$    \\ 
$T_k$     & Kinetic energy &    $J=N\cdot m$ & 				\\
$U_a$     & Potential energy for axial force & $J$ &    \\
$U_b$     & Potential energy due to bending &  $J$ &   \\
$M$         & Bending moment magnitude &   $N\cdot m$	 &	\\
$S_0$         & Initial tensile axial force & $N$ &   \\
$S_1$         & Tensile force due to deflection &  $N$ & \\
$N$         & Total axial force & $N$ & $N=S_0+S_1$  \\
$\L$         & Lagrangian &$J$ & $\L=T_k-U_b-U_a$  \\ 
$\mathcal{W}_{ext}$       & External work &  $J$ & \\
$\mathcal{W}_{d}$     & Dissipative work & $J$ &    \\ 
$\mathcal{W}_{nc}$   & Non-conservative work & $J$  & $\mathcal{W}_{nc}=\mathcal{W}_{d}+\mathcal{W}_{ext}$  \\
$e_I$         & Energy normalization factor &$J$ &   \\ 
$\b$         & Axial force renormalization & $1$  \\
$\hat\mu$         & Crack depth ratio & $1$  \\
$\mu$         & Normalized dynamic viscosity & $1$  \\
\end{tabular}
\vskip2mm
\caption{Nomenclature} \label{phys:table1}
\end{table}

To use the Extended Hamilton's Principle we need to derive  the 
Lagrangian $\mathcal{L}=T_k-U=T_k-U_b-U_a$, where $U_b, U_a$ and $T_k$ are the potential and the kinetic energies. Also, we need the non-conservative external work $\mathcal{W}_{nc}$, which is the sum of the external work $\mathcal{W}_{ext}$ due to the load $p$, and the dissipative work $\mathcal{W}_{d}$ due to the damping force $\mathcal{F}_d$. 

Let $y(x,t)$, $0\leq x\leq L$,  be the position of the arch at the time $t\geq 0$. As usual, the dots denote the time derivatives, and the primes denote the spatial ones.
For the kinetic energy we have
\begin{equation}\label{phys:eq4}
T_k(y)=\frac{\rho A}{2}\int_0^L |\dot y(x,t)|^2\, dx.
\end{equation}
The external work $\mathcal{W}_{ext}$ by the non-conservative load $p(x,t)$ is 
\begin{equation}\label{phys:eq8}
\mathcal{W}_{ext}( y)=\int_0^L p(x,t)y(x,t)\, dx.
\end{equation}

The dissipative force $\mathcal{F}_d(x,t)=-c_d\dot y(x,t)$, $c_d\geq 0$ is a uniformly distributed viscous damping force acting only in the transverse direction. So, the 
dissipative work $\mathcal{W}_{d}$ done by the force is given by
\begin{equation}\label{phys:eq10}
\mathcal{W}_{d}(y)=\int_0^L\mathcal{F}_d(x,t) y(x,t)\, dx=-c_d\int_0^L \dot y(x,t)y(x,t)\, dx.
\end{equation}

The potential energy is 
\begin{equation}\label{phys:eq14}
U(y)= U_b(y)+U_a(y),
\end{equation}
where $U_b$ is the potential energy due to the bending, and $U_a$ is the potential energy due to the axial force. The particular form of these terms depends on the presence of cracks and other factors. 

To simplify the notations the $t$-dependency of $y$ is suppressed, if it does not cause a confusion.

{\bf Beam}. Suppose that we have a uniform beam with no cracks, as shown in Figure \ref{figCrArch}(a). 
The magnitude $M(x)$ of the bending moment vector $\vec M(x)$ of the beam at any point $x\in(0,L)$ is given by $M(x)=EI\kappa(x)$, where $\kappa(x)$ is the curvature of the beam at $x$. Assuming $|y'|\ll 1$, we get $\kappa(x)=y''(x)$, and  $M(x)=EI y''(x)$. Accordingly, the bending potential energy of the beam is given by
 \begin{equation}\label{phys:eq20}
U_b(y)=\frac { EI}2\int_0^L (y'')^2\, dx.
\end{equation}

Now suppose that the beam has cracks as in Section \ref{section:intro}, and in Figure \ref{figCrArch}(b). The cracks are at $0<x_1<\dots< x_m<L$.
The standard approach to modeling a crack is to represent it as a massless rotational spring with the spring constant $k$, and the flexibility $\theta$. 

The spring constant $k$ relates the torque ($N\cdot m$) to the angle of rotation. In our case this relationship takes the form $EI y''(x)=k J[y'](x)$, or $J[y'](x)=\theta y''(x)$, where  
\begin{equation}\label{phys:eq22}
\theta=\frac{EI}{k}.
\end{equation}
Thus the unit of the flexibility $\theta$ is $m$.

If the beam has a rectangular cross-section, as shown in Figure \ref{figCracks}, then the area moment of inertia $I$ of the rectangle can be computed explicitly, and \eqref{phys:eq22} can be simplified further. If the crack is double-sided, then by \cite[Eq. (2.8)-(2.10)]{OSTACHOWICZ1991191}, the expression for the flexibility $\theta$ becomes
\begin{equation}\label{phys:eq24}
\theta=6\pi H\hat\mu^2 (0.535-0.929\hat\mu+3.500\hat\mu^2-3.181\hat\mu^3+5.793\hat\mu^4),
\end{equation}
where $H$ is the half-height of the beam cross-section, and $\hat\mu=a/H$.

If the crack is single-sided, then by \cite[Eq. (2.8)-(2.10)]{OSTACHOWICZ1991191}
\begin{equation}\label{phys:eq26}
\theta=6\pi H\hat\mu^2 (0.6384-1.035\hat\mu+3.7201\hat\mu^2-5.1773\hat\mu^3+7.553\hat\mu^4-7.332\hat\mu^5),
\end{equation}
where $H$ is the entire height of the beam cross-section, and $\hat\mu=a/H$.
 
The potential energy of the rotational spring with the spring constant $k$ is 
\[
U_{crack}=\frac 12 k\a^2,
\]
where $\a$ is the  angle of twist from its equilibrium position in radians.
Since $J[y'](x)\approx\a$ for small jumps in the slope of the beam, we conclude that the total bending potential energy of the cracked beam with $m$ cracks is 
 \begin{equation}\label{phys:eq28}
U_b(y)=\frac {EI}{2}\int_0^L (y'')^2\, dx+\frac{EI}{2}\sum_{i=1}^m \frac 1{\theta_i} |J[y'](x_i)|^2.
\end{equation}

{\bf Shallow arch}. First, consider the uniform case.
Following \cite{Woinowsky1950}, the axial force $N$ in the uniform arch shown in Figure \ref{figCrArch}(a) is represented as the sum of two components $N=S_0+S_1$, where $S_0$ is the initial axial tensile force, and $S_1$ is the axial tensile force due to deflection. That is, the force is positive if it is tensile, and negative if it is compressing. The value of $S_0$ is assumed to be given, and the unknown force $S_1$ can be found through the deflection $y=y(x)$ as follows.

Let $\Delta L$ be the elongation of the arch due to the deflection. By the definition of the Young's modulus $E$
 \begin{equation}\label{phys:eq34}
S_1=EA\frac{\Delta L}{L}.
\end{equation}

The elongation is
 \begin{align}\label{phys:eq36}
\Delta L & =\int_0^L\sqrt{1+|y'(x)|^2}\, dx -L\\
& \approx\int_0^L\left(1+\frac 12|y'(x)|^2\right) dx-L=\frac 12\int_0^L|y'(x)|^2\, dx.\nonumber
\end{align}
Then
 \begin{equation}\label{phys:eq38}
S_1=\frac{EA}{2L} \int_0^L|y'(x)|^2\, dx.
\end{equation}

The potential energy $U_a$ due to the axial force $N$ is
 \begin{equation}\label{phys:eq42}
U_a(y)=\frac{EA}{2L}(\Delta L^*)^2,
\end{equation}
where $\Delta L^*=\frac{NL}{EA}$ is the elongation of the arch caused by the total axial force $N=S_0+S_1$. Therefore
\begin{equation}\label{phys:eq44}
U_a(y)=\frac{L}{2EA}(S_0+S_1)^2=
\frac{L}{2EA}\left(S_0+ \frac{EA}{2L}\int_0^L|y'(x)|^2\, dx\right)^2.
\end{equation}

Now suppose that the arch has cracks as in Section \ref{section:intro}, and in Figure \ref{figCrArch}(b).
To derive its axial potential energy $U_a(y)$, note that there exists a sequence of smooth functions $u_n$, such that $u_n\to y$ in $H^1_0(0,L)$, as $n\to\infty$. For such functions, the potential energy $U_a(u_n)$ is given by the expression in \eqref{phys:eq44}, i.e. 
\begin{equation}\label{phys:eq45}
U_a(u_n)=
\frac{L}{2EA}\left(S_0+ \frac{EA}{2L}\int_0^L|u_n'(x)|^2\, dx\right)^2, \quad n=1,2,\dots
\end{equation}
Because of the continuity of this functional on $H^1_0(0,L)$, we conclude that we can pass to the limit in \eqref{phys:eq45}, as $n\to\infty$, to get
\begin{equation}\label{phys:eq46}
U_a(y)=
\frac{L}{2EA}\left(S_0+ \frac{EA}{2L}\int_0^L|y'(x)|^2\, dx\right)^2,
\end{equation}
i.e. the same expression as \eqref{phys:eq44}, even for an arch with cracks.

{\bf Non-dimensional variables}. Now we find the non-dimensional equivalents for the above physical quantities.
Define the $t$-scale $\omega_0$, and the radius of gyration $r$ by
\begin{equation}\label{phys:eq52}
\omega_0=\left(\frac \pi L \right)^2\sqrt{\frac{EI}{\rho A}},\quad r=\sqrt{\frac{I}{A}}. 
\end{equation}
Then make the change of variables
\begin{equation}\label{phys:eq56}
x\leftarrow\frac{\pi x}{L}, \quad y\leftarrow\frac{y}{r},\quad p\leftarrow\frac{p}{EIr}\left(\frac L\pi\right)^4,\quad 
t\leftarrow \omega_0 t,\quad c_d\leftarrow \frac{c_d}{\rho A\omega_0}.
\end{equation}

Temporarily distinguish the notation for the original physical variables by assigning them the $o$ subscript, and their non-dimensional equivalents by assigning them the $n$ subscript. For example, \eqref{phys:eq56} transformation for $t$
says that $t_n=\omega_0 t_o$. Then
\begin{equation}\label{phys:eq60}
y_o(x_o,t_o)=r y_n(x_n,t_n)=ry_n\left(\frac{\pi x_o}{L},\omega_0 t_o  \right).
\end{equation}
Therefore
\begin{equation}\label{phys:eq62}
y_o'(x_o,t_o)=r \frac{\pi}{L}y_n'(x_n,t_n),\quad y_o''(x_o,t_o)=r \left(\frac{\pi}{L}\right)^2 y_n''(x_n,t_n),
\end{equation}
and
\begin{equation}\label{phys:eq64}
\dot y_o(x_o,t_o)=r \omega_0 \dot y_n(x_n,t_n),\quad \ddot y_o(x_o,t_o)=r \omega_0^2 \ddot y_n(x_n,t_n).
\end{equation}
Let the energy normalization factor $e_I$ be defined by
\begin{equation}\label{phys:eq68}
e_I=\frac L\pi\rho Ar^2\omega_0^2=\frac L\pi\rho I\omega_0^2=\left(\frac{\pi}{L}\right)^3 r^2 EI.
\end{equation}
Then the kinetic energy $T_{k,o}$ from \eqref{phys:eq4} is transformed according to
\begin{align}\label{phys:eq70}
T_{k,o}(y_o)&=\frac{\rho A}2\int_0^L|\dot y_o(x_o,t_o)|^2\,dx_o=\frac L\pi\rho Ar^2\omega_0^2\frac 12\int_0^\pi |\dot y_n(x_n,t_n)|^2\, dx_n\\
&=e_I\frac 12|\dot y_n|_H^2=e_I T_{k,n}(y_n).\nonumber
\end{align}

Consistent with the definition of the flexibility $\theta$ in \eqref{phys:eq22}, we use the transformation $\theta\leftarrow \pi \theta/L$. Therefore, for the  bending potential energy \eqref{phys:eq28} we have
\begin{align}\label{phys:eq72}
&U_{b,o}(y_o)=\frac {EI}{2}\int_0^L |y''_o(x_o)|^2\, dx_o+\frac{EI}{2}\sum_{i=1}^m \frac 1{\theta_{i,o}} |J[y'_o](x_{i,o})|^2\\
&=\left(\frac{\pi}{L}\right)^3 r^2 EI\left[\frac 12 \int_0^\pi |y''_n(x_n)|^2\, dx_n +  \frac 12\sum_{i=1}^m \frac 1{\theta_{i,n}} |J[y'_n](x_{i,n})|^2  \right]\nonumber\\ &=e_I U_{b,n}(y_n).\nonumber
\end{align}

The potential energy $U_a$ due to the axial force is given by \eqref{phys:eq46}. After the substitution \eqref{phys:eq56} we get
\begin{align*}
&U_{a,o}(y_o)=\frac{L}{2EA}\left(S_0+ \frac{EA}{2L} r^2\frac{\pi}{L}\int_0^\pi|y'_n(x_n)|^2\, dx_n\right)^2\\
&=\frac{L}{2EA}\frac{(EA)^2\pi^2r^2 r^2}{L^4}\left(\frac{L^2 S_0}{EA\pi r^2} +\frac 12\int_0^\pi|y'_n(x_n)|^2\, dx_n\right)^2\\
&=\frac{e_I}{2\pi}\left(\b+\frac 12\int_0^\pi|y'_n(x_n)|^2\, dx_n\right)^2,
\end{align*}
where the non-dimensional $\b\in\R$ is a renormalization of the axial force $S_0$.

Similarly, we conclude that $\mathcal{W}_{ext}$, and $\mathcal{W}_{d}$ are transformed by \eqref{phys:eq56}  into their non-dimensional equivalents in the same way:
\begin{align}\label{phys:eq82}
T_k\leftarrow \frac{T_k}{e_I},\quad U_a\leftarrow \frac{U_a}{e_I},\quad U_b\leftarrow \frac{U_b}{e_I},\quad \mathcal{W}_{ext}\leftarrow \frac{\mathcal{W}_{ext}}{e_I},\quad \mathcal{W}_{d}\leftarrow \frac{\mathcal{W}_{d}}{e_I}.
\end{align}

Therefore, the expressions for the non-dimensional quantities are
\begin{equation}\label{phys:eq86}
T_k(y)=\frac 12|\dot y|^2_H,\quad \mathcal{W}_{ext}(y)=(p,y)_H,\quad \mathcal{W}_{d}(y)=-c_d (\dot y,y)_H,
\end{equation}
\begin{equation}\label{phys:eq88}
U_b(y)=\frac 12\left(|y''|^2_H+\sum_{i=1}^{m} \frac 1\theta_i|J[y'](x_i)|^2 \right),
\end{equation}
\begin{equation}\label{phys:eq90}
U_a(y)=\frac{1}{2\pi}\left(\b+\frac 12|y'|^2_H\right)^2,
\end{equation}
where $\b\in \R$. 

{\bf Natural beam frequencies}.
Equation of harmonic transverse oscillations $v=v(x)$ of a uniform beam defined on interval $(0,L)$ is
\begin{equation}\label{phys:eq100}
EIv''''(x)=\omega^2\rho A v(x),\quad 0<x<L.
\end{equation}
For a cracked beam, equation \eqref{phys:eq100} is satisfied on every subinterval $(x_{i-1}, x_i)$, $i=1,\dots, m+1$. 

Using the transformations  to the non-dimensional variables \eqref{phys:eq52}--\eqref{phys:eq64}, we get
\[
EI r\left(\frac{\pi}{L}\right)^4 v_n''''(x_n, t_n)=\omega^2\rho A r v_n(x_n, t_n),
\]
where $v_n(x_n, t_n)$ is $v(x,t)$ in the new (non-dimensional) variables. 
Dropping the subscript $n$, we obtain the equation for $v$ in the non-dimensional quotients
\[
v''''=\omega^2\left(\frac{L}{\pi}\right)^4\frac{\rho A}{EI}v.
\]
Comparing this equation with the definition of the eigenvalues and the eigenfunctions $\varphi_k''''=\l_k^4\varphi_k$, we conclude that the natural beam frequencies are given by
\begin{equation}\label{phys:eq110}
\omega_k=\l_k^2\left(\frac{\pi}{L}\right)^2\sqrt{\frac{EI}{\rho A}}, \quad k\geq 1.
\end{equation}

\setcounter{equation}{0}
\section{Convex functions and subdifferentials}\label{section:conv}

Subdifferentials provide the proper mathematical framework for the abstract formulation of equations of motion.
Following \cite[Section 1.2]{Barbu2010}, let $X$ be a Hilbert space. A function $\phi : X\to (-\infty,+\infty]$ is called \emph{proper and convex} on $X$, if $\phi$ is not identically $+\infty$, and
\begin{equation}\label{conv:eq2}
\phi((1-\l)x+\l y)\leq(1-\l)\phi(x)+\l\phi(y),
\end{equation}
for any $x,y\in X$, and $\l\in[0,1]$.
The function $\phi$ is called \emph{lower-semicontinuous} on $X$, if every level set $\{x\in X : \phi(x)\leq c\},\, c>-\infty$, is closed in $X$.

Given a proper, convex, lower-semicontinuous function $\phi$ on $X$, the \emph{subdifferential} $\pa\phi : X\to X'$ is defined by
\begin{equation}\label{conv:eq6}
\pa\phi(x)=\{x^*\in X'\ :\ \phi(y)\geq \phi(x)+\langle x^*, y-x\rangle\},
\end{equation}
for any $y\in X$. Thus $\pa\phi\subset X\times X'$.

In general, $\pa\phi$ does not have to be defined everywhere on $X$. Furthermore, the mapping $x\to\pa\phi(x)\subset X'$ can be multi-valued. Geometrically, if $\pa\phi(x)$ is single-valued at $x\in X$, then $y\to \phi(x)+\langle \pa\phi(x), y-x\rangle $ is the tangent plane to the graph of $\phi$ at $x$.
%

Let $D(\phi)=\{x\in X : \phi(x)<\infty\}$, and $D(\pa\phi)=\{ x\in X : \pa\phi(x)\not=\emptyset\}$. Note that if $x\in D(\pa\phi)$, then $x\in D(\phi)$. Indeed, $\phi$ is a proper function. Therefore $D(\phi)\not=\emptyset$. Choose $y\in D(\phi)$, and $x^*\in \pa\phi(x)$. Then $\phi(x)\leq \phi(y)-\langle x^*, y-x\rangle<\infty$, as claimed. 

Let $f : X\to \R$. The \emph{directional derivative} $f'(x;y)$ of $f$ at $x \in X$ in the direction $y\in X$ is defined by
\begin{equation}\label{conv:eq10}
f'(x;y)=\lim_{\a\to 0+}\frac{f(x+\a y)-f(x)}{\a}.
\end{equation}

A function $f$ is said to be G\^ateaux differentiable at $x\in X$, if there exists $(\nabla f)(x)\in X'$, such that 
\begin{equation}\label{conv:eq12}
f'(x;y)=\langle \nabla f(x), y\rangle,
\end{equation}
for any $y\in X$. The linear functional $(\nabla f)(x)$ is called the G\^ateaux derivative of $f$ at $x\in X$.

If the convergence in \eqref{conv:eq10} is uniform in $y$ on bounded subsets, then $f$ is said to be \emph{Fr\'echet differentiable}.

The G\^ateaux differentiability of $f$ is a more restrictive condition, than $f$ is having a subdifferential. The following lemma is proved in \cite[Section 1.2]{Barbu2010}: 
\begin{lemma}\label{conv:lemma2}
If $\phi$ is convex and G\^ateaux differentiable at $x\in X$, then $\pa\phi(x)=\nabla \phi(x)$, and $\pa\phi(x)$ is a singleton.
\end{lemma}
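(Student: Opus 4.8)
\textbf{Proof plan for Lemma \ref{conv:lemma2}.}

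The plan is to show the two inclusions $\{\nabla\phi(x)\}\subseteq\pa\phi(x)$ and $\pa\phi(x)\subseteq\{\nabla\phi(x)\}$, which together give that $\pa\phi(x)$ is the singleton $\{\nabla\phi(x)\}$. The whole argument rests on one elementary but crucial monotonicity fact about convex functions: for fixed $x,y\in X$ the difference quotient
\[
g(\a)=\frac{\phi(x+\a y)-\phi(x)}{\a}
\]
is nondecreasing in $\a>0$. This follows from convexity \eqref{conv:eq2} applied to the points $x+\a_1 y$ and $x+\a_2 y$ with $0<\a_1<\a_2$, writing $x+\a_1 y$ as a convex combination of $x$ and $x+\a_2 y$. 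In particular the limit defining $\phi'(x;y)$ in \eqref{conv:eq10} exists (it is the infimum over $\a>0$) and, taking $\a=1$, we get the basic inequality $\phi'(x;y)\le\phi(x+y)-\phi(x)$ for every $y$. Replacing $y$ by $y-x$ for arbitrary $y\in X$ yields $\phi(y)\ge\phi(x)+\phi'(x;y-x)$.

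For the first inclusion, suppose $\phi$ is G\^ateaux differentiable at $x$, so by \eqref{conv:eq12} we have $\phi'(x;y-x)=\langle\nabla\phi(x),y-x\rangle$ for all $y\in X$. Combining this with the inequality just derived gives $\phi(y)\ge\phi(x)+\langle\nabla\phi(x),y-x\rangle$ for all $y$, which is exactly the condition \eqref{conv:eq6} for $\nabla\phi(x)\in\pa\phi(x)$; hence $D(\pa\phi)\ni x$ and $\nabla\phi(x)\in\pa\phi(x)$.

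For the reverse inclusion, let $x^*\in\pa\phi(x)$ be arbitrary. From \eqref{conv:eq6}, applied with $y$ replaced by $x+\a z$ for a fixed direction $z\in X$ and a parameter $\a>0$, we obtain $\phi(x+\a z)-\phi(x)\ge\a\langle x^*,z\rangle$, that is $g(\a)\ge\langle x^*,z\rangle$. Letting $\a\to 0+$ and using G\^ateaux differentiability gives $\langle\nabla\phi(x),z\rangle=\phi'(x;z)\ge\langle x^*,z\rangle$ for every $z\in X$. Since $z$ ranges over all of $X$ we may also replace $z$ by $-z$, which forces $\langle\nabla\phi(x),z\rangle=\langle x^*,z\rangle$ for all $z$, i.e. $x^*=\nabla\phi(x)$. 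Thus $\pa\phi(x)$ contains exactly one element, namely $\nabla\phi(x)$.

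I do not anticipate a serious obstacle here; the only point requiring a little care is the monotonicity of the difference quotient, since everything else is a direct substitution into the definitions. If one wished to avoid even that, one could instead invoke the convexity inequality directly at the two places where $g(\a)\le g(1)$ and $g(\a)\ge\langle x^*,z\rangle$ are used, but packaging it as the monotonicity of $g$ keeps the exposition cleanest.
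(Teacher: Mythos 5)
Your proof is correct. The paper does not prove this lemma at all---it simply cites \cite[Section 1.2]{Barbu2010}---and your argument is essentially the standard one found there: monotonicity of the difference quotient $\a\mapsto\bigl(\phi(x+\a y)-\phi(x)\bigr)/\a$ gives $\phi(y)\geq\phi(x)+\phi'(x;y-x)$, hence $\nabla\phi(x)\in\pa\phi(x)$, while the subgradient inequality applied along $x+\a z$ and the limit $\a\to 0+$, with $z$ replaced by $-z$, forces any $x^*\in\pa\phi(x)$ to equal $\nabla\phi(x)$. Both inclusions are handled cleanly, so the lemma is fully established by your argument.
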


Recall that an operator $A : X\to X'$ is called symmetric, if $\langle Au,v\rangle=\langle Av, u\rangle$, for any $u,v\in D(A)$.  
\begin{theorem}\label{conv:thm3}
Let $X$ be a Hilbert space, and $A : X\to X'$ be a linear, continuous, and symmetric operator, such that $D(A)=X$, and $\langle Au, u\rangle\geq 0$ for any $u\in X$.
Then function $\phi : X\to \R$ defined by
\begin{equation}\label{conv:thm3:eq7}
\phi(u)=\frac 12\langle Au,u \rangle,\quad u\in X,
\end{equation}
is convex, proper, and lower-semicontinuous on $X$. Moreover, it is Fr\'echet differentiable on $X$ with $\nabla\phi(u)=\pa\phi(u)=Au$ for any $u\in X$, and $D(\phi)=D(\pa\phi)=X$.
\end{theorem}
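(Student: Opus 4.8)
The plan is to carry out everything in terms of the bilinear form $a(u,v)=\langle Au,v\rangle$ associated with $A$. By hypothesis $a$ is bilinear, symmetric ($a(u,v)=a(v,u)$), bounded (so that $|a(u,v)|\le \|A\|\,\|u\|_X\,\|v\|_X$, where $\|A\|$ is the operator norm of $A:X\to X'$), and nonnegative on the diagonal ($a(u,u)\ge 0$). Every assertion of the theorem then reduces to a short computation with $a$; in particular $\phi(u)=\tfrac12 a(u,u)$ is a finite, nonnegative real number for every $u\in X$, so $\phi$ is not identically $+\infty$ and never equals $-\infty$, hence $\phi$ is proper and $D(\phi)=X$.

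For convexity I would expand $\phi\big((1-\l)x+\l y\big)$ by bilinearity and symmetry of $a$ and collect terms, obtaining the identity
\[
(1-\l)\phi(x)+\l\phi(y)-\phi\big((1-\l)x+\l y\big)=\tfrac12\,\l(1-\l)\,a(x-y,x-y).
\]
Since $a(x-y,x-y)\ge 0$ and $\l(1-\l)\ge 0$ for $\l\in[0,1]$, this gives \eqref{conv:eq2}. For lower-semicontinuity I would in fact prove the stronger statement that $\phi$ is locally Lipschitz: writing $a(u,u)-a(v,v)=a(u,u-v)+a(u-v,v)$ and using boundedness of $a$ yields $|\phi(u)-\phi(v)|\le \tfrac12\|A\|\,(\|u\|_X+\|v\|_X)\,\|u-v\|_X$, so $\phi$ is continuous and every level set $\{\phi\le c\}$ is closed.

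Finally, for differentiability and the subdifferential, the key identity, valid for all $u,h\in X$ by symmetry of $a$, is
\[
\phi(u+h)-\phi(u)=a(u,h)+\tfrac12\,a(h,h).
\]
Since $\bigl|\tfrac12 a(h,h)\bigr|\le \tfrac12\|A\|\,\|h\|_X^2$, the remainder is $o(\|h\|_X)$ uniformly in the direction of $h$, so $\phi$ is Fr\'echet differentiable on $X$ with $\nabla\phi(u)=Au\in X'$; in particular it is G\^ateaux differentiable. Then Lemma \ref{conv:lemma2} gives $\pa\phi(u)=\nabla\phi(u)=Au$ as a singleton for every $u\in X$, whence $D(\pa\phi)=X$. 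Equivalently, one can argue directly: setting $h=y-u$ in the identity above gives $\phi(y)-\phi(u)-\langle Au,y-u\rangle=\tfrac12 a(y-u,y-u)\ge 0$, so $Au\in\pa\phi(u)$ by definition \eqref{conv:eq6}, while convexity together with G\^ateaux differentiability forces $\pa\phi(u)$ to contain nothing else.

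I do not expect any step here to be a genuine obstacle: each is a two-line manipulation of a symmetric bounded bilinear form. The only point that warrants a moment's care is the distinction between Fr\'echet and G\^ateaux differentiability, but since the error term $\tfrac12 a(h,h)$ is quadratic in $h$ and controlled uniformly by $\tfrac12\|A\|\,\|h\|_X^2$, Fr\'echet differentiability is immediate from the boundedness of $A$.
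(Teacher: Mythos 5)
Your proposal is correct and follows essentially the same route as the paper's proof: the same algebraic identity for convexity, continuity of $\phi$ (from boundedness of $A$) for lower-semicontinuity, and the quadratic-remainder identity $\phi(v)-\phi(u)-\langle Au,v-u\rangle=\tfrac12\langle A(v-u),v-u\rangle$ to get Fr\'echet differentiability and $\pa\phi=A$. Your explicit remarks on properness and on why $\pa\phi(u)$ contains nothing but $Au$ (via Lemma \ref{conv:lemma2} or the direct subdifferential inequality) only make explicit what the paper leaves implicit.
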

\begin{proof}
To see that $\p$ is convex, let $0\leq \l\leq 1$, and $u,v\in X$. Then
\begin{align*}
& \phi(\l u+(1-\l)v)=\frac 12 \left[\l^2\langle Au,u\rangle+2\l(1-\l)\langle Au,v\rangle+(1-\l)^2\langle Av,v\rangle\right]\\
&=  \frac 12\left[\l \langle Au,u\rangle+(1-\l)\langle Av,v\rangle-\l(1-\l)\langle A(u-v),(u-v)\rangle  \right]\\
&\leq \l\phi(u)+(1-\l)\phi(v).
\end{align*}
Since $A$ is continuous, function $\phi$ is continuous on $X$. In particular, it is lower-semicontinuous on $X$. From the continuity of $A$, we have $\|Av\|_{X'}\leq C\|v\|_X$, for some $C>0$. Since
\[
\phi(v)-\phi(u)-\langle Au,v-u\rangle=\frac 12\langle  A(v-u),v-u\rangle,
\]
we conclude that
\begin{equation}\label{conv:thm3:eq8}
|\phi(v)-\phi(u)-\langle Au,v-u\rangle|\leq \frac C2\|v-u\|_X^2.
\end{equation}
Therefore $Au=(\nabla \phi)(u)$ is the G\^ateaux (even Fr\'echet) derivative of $\phi$ at $u\in X$, and $\pa\phi=A$ is its single-valued subdifferential, with $D(\pa\phi)=D(\p)=X$.
\end{proof}

\begin{example}\label{conv:exm2}
Let $V$ be the Hilbert space defined in \eqref{hilbert:eq22}, and $\A$ be the linear operator defined in \eqref{var:eq8}. Let
\begin{equation}\label{conv:exm2:eq2}
\p(u)=\frac 12\langle \A u, u\rangle,\quad u\in V.
\end{equation}

By Theorem \ref{conv:thm3} with $X=V$, function $\p : V\to\R$ is proper, convex, and lower-semicontinuos on $V$. Furthermore, $D(\pa\p)=V$, and $\pa\p(u)=\A u$, for any $u\in V$.

For a general $u\in V$, the expression for $\pa\p(u)\in V'$ is complicated. However, if we assume that $u$ is somewhat more regular, then we can get a simpler expression for it. 

Suppose that $u\in D(\A)\subset V$. Then, by  Theorem \ref{var:thm2}, we have $\A u=u''''$ a.e. on every interval $l_i$, $i=1,\dots, m+1$. Thus, we can say that $\pa\p(u)=u''''$ a.e. on every such interval.

Suppose further, that $u\in H_0^1(0,\pi)\cap H^4(0,\pi)\subset D(\A)$. Then $u''''\in L^2(0,\pi)$, and
 $u'$ is smooth. Thus $J[u'](x_i)=0$ for any $i=1,\dots,m$, and we have
\begin{equation}\label{conv:exm2:eq4}
\langle \pa\p(u),v\rangle=\langle \A u, v\rangle=\int_0^\pi u''(x)v''(x)\, dx=\int_0^\pi u''''(x) v(x)\, dx,
\end{equation}
for any $v\in V$. Therefore, in this case $\pa\p(u)=u''''$ a.e. on $[0,\pi]$.
\end{example}

\begin{example}\label{conv:exm4}
Let $H_0^1=H_0^1(0,\pi)$ be the Hilbert space defined in \eqref{hilbert:eq60}, and $\langle\cdot, \cdot\rangle_1$ be the duality pairing between $H_0^1$ and $(H_0^1)'$.
 Let $\B$ be the linear operator defined by
\begin{equation}\label{conv:exm4:eq2}
\langle \B u, v\rangle_1=(u',v')_H,\quad  u,v\in H_0^1.
\end{equation}
Then $\B : H_0^1\to (H_0^1)'$ is  continuous, symmetric and coercive on $H_0^1$. In particular, $\B$ is positive, and its range is $(H_0^1)'$. 

Let 
\begin{equation}\label{conv:exm4:eq6}
\psi(u)=\frac 12\langle \B u, u\rangle,\quad u\in H_0^1.
\end{equation}
Theorem \ref{conv:thm3} is applicable with $X=H_0^1$, and $A=\B$. We conclude that
the function $\psi : H_0^1\to\R$ is proper, convex, and lower-semicontinuos on $H_0^1$. Furthermore, $D(\pa\psi)=H_0^1$, and $\pa\psi(u)=\B u\in (H_0^1)'$, for any $u\in H_0^1$.

 As in Example \ref{conv:exm2}, a simpler expression for the subdifferential $\pa\psi(u)$ can be obtained assuming an additional regularity of $u\in H_0^1$.

Suppose that $u\in V\subset H_0^1$. Then we have
\begin{align}\label{conv:exm4:eq8}
\langle \B u, v\rangle_1&=(u',v')_H=\int_0^\pi u'(x)v'(x)\, dx=\\
&-\sum_{i=1}^m J[u'(x_i)]v(x_i)-\sum_{i=1}^{m+1} (u'',v)_i,\nonumber
\end{align}
for any $v\in H_0^1$. Therefore, in this case,
\begin{equation}\label{conv:exm4:eq10}
\pa\psi(u)=\B u=-\sum_{i=1}^m J[u'](x_i)\d(x-x_i)-u'',
\end{equation}
where $\d(x-a),\, a\in [0,\pi]$ is the element of $(H_0^1)'$, defined by $\langle \d(x-a), v\rangle_1=v(a)$, for any $v\in H_0^1$.

Suppose further, that $u\in D(\A)\subset V$. Then, by Theorem \ref{var:thm2}, $u$ satisfies conditions \eqref{var:eq2}--\eqref{var:eq4}. In particular, $J[u'](x_i)=\theta_i u''(x_i)$. Thus, with this additional assumption on $u$, we have
\begin{equation}\label{conv:exm4:eq12}
\pa\psi(u)=\B u=-\sum_{i=1}^m \theta_i u''(x_i)\d(x-x_i)-u'',
\end{equation}
which is still an element of $(H_0^1)'$.
\end{example}

\section{Extended Hamilton's principle}\label{section:ham}
\setcounter{equation}{0}

To derive the governing equations for beams and  arches, we  use the Extended Hamilton's Principle, which accommodates non-conservative forces, see \cite{KIM20133418}.

The Principle states that the motion $y=y(x,t)$ of the system gives a stationary value to the \emph{action} of the system, i.e. to the integral
\begin{equation}\label{ham:eq2}
I(y)=\int_{t_1}^{t_2} (\mathcal{L}+\mathcal{W}_{nc})\, dt,
\end{equation}
where the Lagrangian $\mathcal{L}=T_k-U$ is the difference between the kinetic energy $T_k$ and the potential energy $U=U_b+U_a$. The term $\mathcal{W}_{nc}=\mathcal{W}_{d}+\mathcal{W}_{ext}$ for the non-conservative external work $\mathcal{W}_{nc}$ is the sum of the external work $\mathcal{W}_{ext}$, due to the load $p$, and the dissipative work $\mathcal{W}_{d}$, due to a damping force $\mathcal{F}_d$.

Thus \eqref{ham:eq2} becomes
\begin{equation}\label{ham:eq12}
I(y)=\int_{t_1}^{t_2} \left(T_k(y)-U_b(y)-U_a(y)+\mathcal{W}_{d}(y,\dot y)+\mathcal{W}_{ext}(y)  \right)\, dt,
\end{equation}
where the components of the action are given in \eqref{phys:eq86}--\eqref{phys:eq90}.

The traditional way of expressing the fact that the motion of the system gives a stationary value to the  functional $I$ defined in \eqref{ham:eq12}, is to say that its \emph{variation}  $\d I=0$. Let us examine this statement in more detail within the framework of the Hilbert spaces.

Recall that the Hilbert spaces $V, H_0^1$, and $H$ were defined in Section \ref{section:hilbert}.
Define  
\[
W=\{y\ :\ y\in L^2(0,T;V),\ \dot y\in L^2(0,T;H),\ \ddot y\in L^2(0,T;V')\},
\]
where the derivatives are taken in the sense of distributions, \cite[Chapter 2]{temam1997infinite}. This is a Hilbert space with the standard inner product and the norm.

Given $\eta\in W$, the directional derivative $I'(y,\eta)$ is defined by
\begin{equation}\label{ham:eq16}
I'(y,\eta)=\lim_{\a\to 0+}\frac{I(y+\a\eta)-I(y)}{\a},
\end{equation}
which is consistent with \eqref{conv:eq10}.

The G\^ateaux derivative $\nabla I(y)$ (if exists) is an element of the dual $W'$, such that
\begin{equation}\label{ham:eq18}
I'(y,\eta)=\langle\nabla I(y),\eta\rangle_W,
\end{equation}
for any $\eta\in W$, see  Section \ref{section:conv}, and \cite[Section 1.2]{Barbu2010}.

Functional $I$ has a stationary value at $y$ means that $\nabla I(y)=0$, which gives the meaning to the infinitesimal variation equation $\d I=0$.

To derive the governing equation, one has to transform the stationary value equation $\nabla I(y)=0$ into a more explicit form by computing the directional derivatives of $I$ along the specially chosen $\eta\in W$.

Let $\eta(x,t)=\zeta(t) v(x)$, where $\zeta : [t_1,t_2]\to\R$ is a smooth function  satisfying $\zeta(t_1)=\zeta(t_2)=0$, and $v\in V$. 

To obtain $\nabla I(y)$, we compute the corresponding directional derivatives for every term in \eqref{ham:eq12} for the same $\eta=\zeta v$. First, notice that
\begin{equation}\label{ham:eq24}
T_k(y+\a\eta)-T_k(y)= \a(\dot y, v)_H\dot\zeta  +\frac{\a^2}{2} |\dot\zeta|^2\,|v|_H^2.
\end{equation}
By the definition of the directional derivative, $T_k'(y,\eta)= \int_{t_1}^{t_2}(\dot y, v)_H\dot\zeta\, dt$.
Integrating by parts in $t$, and using $\zeta(t_1)=\zeta(t_2)=0$, we get
\begin{equation}\label{ham:eq26}
T_k'(y,\eta)= \int_{t_1}^{t_2} (\dot y(t), v)_H\dot\zeta(t)\, dt=-\int_{t_1}^{t_2} \langle\ddot y(t), v\rangle_V\zeta(t)\, dt.
\end{equation}
Similarly,
\begin{equation}\label{ham:eq28}
\mathcal{W}_{ext}'(y,\eta)= \int_{t_1}^{t_2} (p(t),v)_H\zeta(t)\, dt,
\end{equation}
where the dependency of $p=p(x,t)$ on $x$ is suppressed. 

The work $\mathcal{W}_d$ done by the non-conservative force $\mathcal{F}_d$ is not covered by the classical Hamilton's Principle. To accommodate such a case, the Principle is extended by introducing the Rayleigh's dissipation functional, for which the action is \emph{defined} in a non-variational manner, see \cite{KIM20133418} for details. In our case it implies that
\begin{equation}\label{ham:eq30}
\mathcal{W}_d'(y,\eta)= -c_d\int_{t_1}^{t_2} (\dot y(t),v)_H\zeta(t)\, dt.
\end{equation}

The expressions for the potential energies $U_b$ and $U_a$ depend on whether the system is a beam or an arch, and on the presence of cracks. They are given in \eqref{phys:eq88}--\eqref{phys:eq90}.
In every case,  $U_b(u)$ is a convex, lower-semicontinuous function on $V$,
and $U_a(u)$ is a convex, lower-semicontinuous function on $H_0^1$.
 Therefore they admit subdifferentials $\pa U_b$ and $\pa U_a$, on $V$ and $H_0^1$ correspondingly, see Section \ref{section:conv}.

 If $U_b$ and $U_a$ are Gateaux differentiable, then the subdifferentials are equal to their Gateaux derivatives. Thus, we have for $\eta=\zeta v$
\begin{align}\label{ham:eq38}
U_b'(y,\eta)&=\int_{t_1}^{t_2} \langle\pa U_b(y(t)), v\rangle_V\zeta(t)\, dt,\\ U_a'(y,\eta)&=\int_{t_1}^{t_2} \langle\pa U_a(y(t)), v\rangle_1\zeta(t)\, dt.
\end{align}

The stationary value equation $\nabla I(y)=0$, for $I(y)$ given by \eqref{ham:eq12}, implies
\begin{multline}\label{ham:eq42}
0=(\nabla I(y),\eta)_W=I'(y,\eta)\\
=\int_{t_1}^{t_2}\left[( -c_d\dot y+p(t), v)_H  -\langle \ddot y+   \pa U_b(y(t)), v\rangle_V-\langle\pa U_a(y(t)), v\rangle_1   \right] \zeta(t)\, dt.
\end{multline}
Since $\zeta$ is an arbitrary smooth function on $[t_1, t_2]$, we get
\begin{equation*}
( -c_d\dot y+p(t), v)_H  -\langle \ddot y+   \pa U_b(y(t)), v\rangle_V-\langle\pa U_a(y(t)), v\rangle_1 =0,
\end{equation*}
for any $v\in V$, and almost any $t\in [0,T]$. This can further be written as
\begin{equation}\label{ham:eq44}
\langle -\ddot y- \pa U_b(y(t))-\pa U_a(y(t)) -c_d\dot y+p(t), v\rangle_V =0.
\end{equation}
 Thus \eqref{ham:eq44} implies the following main result: equation
\begin{equation}\label{ham:eq46}
\ddot y+\pa U_b(y)+\pa U_a(y)+c_d\dot y=p,
\end{equation}
 is the abstract governing equation of the system in $V'$, a.e. for $t\in[0,T]$.
Note: $\pa U_b : V\to V'$, and $\pa U_a : H_0^1\to (H_0^1)'$.

\section{Beam equation}\label{section:beameq}
\setcounter{equation}{0}
{\bf Uniform beam}. Suppose that we have a uniform beam with no cracks, as shown in Figure \ref{figCrArch}(a). 
The classical Euler-Bernoulli beam theory 
\cite{HAN1999} 
assumes that the beam motion is due mainly to its bending, while the influence of the axial force is negligible. Thus we let $U_a=0$. 

By \eqref{phys:eq88} the bending potential energy of the uniform beam is given by
 \begin{equation}\label{beameq:eq4}
U_b(y)=\frac 12|y''|^2_H.
\end{equation}

Define operator $\A :V\to V'$ by 
 \begin{equation}\label{beameq:eq6}
 \langle \A u,v\rangle=(u'', v'')_H,
 \end{equation}
for any $u,v\in V$. The operator $\A$ is linear, symmetric, and coercive on $V$.
Then
 \begin{equation}\label{beameq:eq10}
U_b(u)=\frac 12 \langle\A u, u\rangle,
\end{equation}
for any $u\in V$. 
By Theorem \ref{conv:thm3}, $U_b$ is a convex, lower-semicontinuous function on $V$. Furthermore, $\pa U_b(u)=\A u$.
From \eqref{ham:eq46}, the abstract Uniform Beam equation in $V'$ is
 \begin{equation}\label{beameq:eq14}
\ddot y+\A y+c_d\dot y=p.
\end{equation}

Assume that $u\in D(\A)$.  By Example \ref{conv:exm2}, $\A u=u''''$ for such $u$. Therefore, with this assumption, equation \eqref{beameq:eq14} can be written as
 \begin{equation}\label{beameq:eq24}
\ddot y+y''''+c_d\dot y=p,
\end{equation}
which is the classical Euler-Bernoulli form of the Beam equation for beams without cracks.
To obtain this equation in physical variables, reverse the substitutions in Section \ref{section:phys}.

{\bf Beam with cracks}.
Now suppose that the beam has cracks as in Section \ref{section:intro}, and in Figure \ref{figCrArch}(b). Continuing with the classical Euler-Bernoulli beam theory we disregard the axial force, and let $U_a=0$.

By \eqref{phys:eq88}, the bending potential energy of the cracked beam with $m$ cracks is 
 \begin{equation}\label{beameq:eq54}
U_b(y)=\frac 12\left(|y''|^2_H+\sum_{i=1}^{m} \frac 1\theta_i|J[y'](x_i)|^2 \right).
\end{equation}
Arguing as in Example \ref{conv:exm2}, we get that $U_b$ is a convex, lower-semicontinuous function on $V$. 

Define operator $\A :V\to V'$ as in equation \eqref{var:eq8}, i.e.
 \begin{equation}\label{beameq:eq56}
\langle \A u,v\rangle= \sum_{i=1}^{m+1} (u'', v'')_i+\sum_{i=1}^m \frac 1\theta_i J[u'](x_i) J[v'](x_i), 
\end{equation}
for any $u,v\in V$.
By Lemma \ref{var:lemma1}, the operator $\A$ is linear, symmetric, and coercive on $V$.
Then 
 \begin{equation}\label{beameq:eq60}
U_b(u)=\frac 12 \langle\A u, u\rangle,
\end{equation}
for any $u\in V$. 

By Theorem \ref{conv:thm3},  $\pa U_b(u)=\A u$.
Therefore the abstract equation for the beam with cracks is
\begin{equation}\label{beameq:eq64}
\ddot y+\A y+c_d\dot y=p,
\end{equation}
which is satisfied in $V'$, a.e. for $t\in [0,T]$.

Furthermore, using Example \ref{conv:exm2},
if $u\in D(\A)$, then $u$ satisfies the boundary conditions of the problem, i.e.  \eqref{var:eq2} and \eqref{var:eq4}, as well as $\A u=u''''$ a.e. on every interval $l_i$, $i=1,\dots, m+1$.

Then equation \eqref{beameq:eq64} can be written as 
 \begin{equation}\label{beameq:eq74}
\ddot y+y''''+c_d\dot y=p.
\end{equation}
We can call it the classical Beam equation for beams with cracks. While this equation looks the same as the Beam equation for beams with no cracks \eqref{beameq:eq24}, its abstract formulation \eqref{beameq:eq64} uses the operator $\A$ defined by \eqref{beameq:eq56}, rather than by \eqref{beameq:eq6}. In particular, $y(\cdot, t)\in V$, a.e. $t\in [0,T]$.

{\bf Strong damping}. Viscous effects on the beam and arch motion are discussed in \cite{BALL1973399, Emmrich_2011}. Considerations based on the Voigt model for viscoelasticity result in the additional term $\mu\A\dot y$ in the governing equations. Here $\mu >0$ is a non-dimensional normalized dynamic viscosity coefficient. 

If such a term is present, we refer to the model as having the \emph{strong damping}. Otherwise, if $\mu=0$, the model is for the \emph{weak damping}. In particular, equations \eqref{beameq:eq64} and \eqref{beameq:eq74} describe the weak beam damping  motion case. The corresponding non-dimensional abstract and classical equations in the presence of the strong damping $\mu>0$ are 
 
 \begin{equation}\label{beameq:eq82}
\ddot y+\A y+\mu\A\dot y+c_d\dot y=p,
\end{equation}
and
 \begin{equation}\label{beameq:eq84}
\ddot y+y''''+\mu\dot y''''+c_d\dot y=p.
\end{equation}
 
\section{Arch equation}\label{section:archeq}
\setcounter{equation}{0}

{\bf Uniform shallow arch}. The potential energy for the arch contains the term $U_a$, due to the axial force. By \eqref{phys:eq90}
 \begin{equation}\label{archeq:eq4}
U_a(y)=\frac{1}{2\pi}\left(\b+\frac 12|y'|^2_H\right)^2.
\end{equation}

To compute the subdifferential of $U_a$, note that for $u\in V$,
 \begin{equation}\label{archeq:eq20}
\pa U_a(u)=\frac 1\pi\left(\b+\frac 12|y'|^2_H\right)\pa \psi(u),
\end{equation}
where $\psi$ is 
 \begin{equation}\label{archeq:eq22}
\psi(u)=\frac 12|u'|^2_H=\frac 12\langle\B u,u\rangle, \quad u\in V,
\end{equation}
see Example \ref{conv:exm4}. Since there are no cracks in the arch, we have $\pa\psi(u)=-u''$, per \eqref{conv:exm4:eq10}.

Using the subdifferential $\pa U_b(u)=\A u$, for $\A$ defined by \eqref{beameq:eq6}, the abstract form of the uniform shallow arch equation is
 \begin{equation}\label{archeq:eq24}
\ddot y+\A y-\frac 1\pi\left(\b+\frac 12|y'|^2_H\right)y''+ c_d\dot y=p,
\end{equation}
in $V'$, a.e. on $[0,T]$.

Its classical form is
 \begin{equation}\label{archeq:eq28}
\ddot y+ y''''-\frac 1\pi\left(\b+\frac 12\int_0^\pi |y'(x,t)|^2\, dx\right)y''+ c_d\dot y=p,
\end{equation}
cf. \cite[eq. (6)]{Woinowsky1950}.

{\bf Arch with cracks}.
Now suppose that the arch has cracks as in Section \ref{section:intro}, and in Figure \ref{figCrArch}(b).
Its axial potential energy $U_a(y)$ has the same expression as \eqref{archeq:eq4}, even if the arch has cracks. 

However, the subdifferential $\pa U_a(u)$ is different from the one in the smooth case. It has been computed in Example \ref{conv:exm4} as
 \begin{equation}\label{archeq:eq40}
\pa\psi(u)=\B u=-\sum_{i=1}^m J[u'](x_i)\d(x-x_i)-u'',
\end{equation}
for $u\in V$. The bending potential $U_b$ is given by \eqref{beameq:eq54}. Then its subdifferential $\pa U_b(u)=\A u$, and we get from  \eqref{ham:eq46} that
 \begin{equation}\label{archeq:eq44}
\ddot y+\A y-\frac 1\pi\left(\b+\frac 12|y'|^2_H\right)\left(\sum_{i=1}^m J[y'](x_i)\d(x-x_i)-y''  \right) + c_d\dot y=p,
\end{equation}
is the abstract equation for a shallow arch with cracks in $V'$, a.e. $t\in[0,T]$.

Then, assuming that the function $y$ is smooth as discussed in Example \ref{conv:exm4}, we can use \eqref{conv:exm4:eq12} for the subdifferential $\pa U_a(u)$, and $\pa U_b(u)=\A u=u''''$. This results in  
\begin{align}\label{archeq:eq48}
&\ddot y+y''''\\
&-\frac 1\pi\left(\b+\frac 12|y'|^2_H\right)\left(\sum_{i=1}^m \theta_i y''(x_i,t)\d(x-x_i)-y''\right) + c_d\dot y=p,\nonumber
\end{align}
which can be called the "classical" form of the shallow arch equation with cracks. These equations are also referred to as describing the weak arch damping motion.

{\bf Strong damping}.  Equations \eqref{archeq:eq44} and \eqref{archeq:eq48} describe the weak arch damping $\mu=0$  motion case. The corresponding non-dimensional abstract and "classical" equations in the presence of the strong damping $\mu>0$ are
 \begin{equation}\label{archeq:eq54}
\ddot y+\A y+\mu\A\dot y-\frac 1\pi\left(\b+\frac 12|y'|^2_H\right)\left(\sum_{i=1}^m J[y'](x_i)\d(x-x_i)-y''  \right) + c_d\dot y=p,
\end{equation}
and
\begin{align}\label{archeq:eq58}
&\ddot y+y''''+\mu \dot y''''\\
&-\frac 1\pi\left(\b+\frac 12|y'|^2_H\right)\left(\sum_{i=1}^m \theta_i y''(x_i,t)\d(x-x_i)-y''\right) + c_d\dot y=p,\nonumber
\end{align}
see Section \ref{section:beameq}.

\bibliographystyle{plain} 
\bibliography{references}

\end{document}